\theoremstyle{plain}
\newtheorem{theorem}{Theorem}[section]
\newtheorem*{slemma}{Lemma}
\theoremstyle{definition}
\newtheorem{definition}[theorem]{Definition}
\theoremstyle{remark}
\newtheoremstyle{tiny}
     {\topsep}
     {\topsep}
     {\footnotesize}
     {}
     {\itshape}
     {.}
     {.5em}
     {}
\theoremstyle{tiny}
\newcommand{\comma}{{},\,}
\newcommand{\col}{\!:\:\!}
\newcommand{\sSet}{\mathsf{sSet}}
\newcommand{\xra}[1]{\xrightarrow{#1}}
\newcommand{\ra}{\rightarrow}
\newcommand{\xlra}[1]{\xrightarrow{\ #1\ }}
\newcommand{\cof}[1][]{\mathbin{\:\!\!\xymatrix@1@C=15pt{{}\ar@{ >->}[r]^{#1} & {}}}}
\newcommand{\fib}[1][]{\mathbin{\:\!\!\xymatrix@1@C=15pt{{}\ar@{->>}[r]^{#1} & {}}}}
\newcommand{\embed}[1][]{\mathbin{\:\!\!\xymatrix@1@C=15pt{{}\ar@{c->}[r]^{#1} & {}}}}
\newcommand{\pbsize}{15pt}
\newcommand{\pboffset}{.5}
\newcommand{\xycorner}[3]{\save #2="a";#1;"a"**{}?(\pboffset);"a"**\dir{-};#3;"a"**{}?(\pboffset);"a"**\dir{-}\restore}
\newcommand{\pb}{\xycorner{[]+<\pbsize,0pt>}{[]+<\pbsize,-\pbsize>}{[]+<0pt,-\pbsize>}}
\newcommand{\xymatrixc}[2][]{\xy *!C\xybox{\xymatrix#1{#2}}\endxy}
\newcommand{\xysmallbullet}{*+<.2pc>!!<0pt,\the\fontdimen22\scriptfont2>{\scriptstyle\bullet}}
\newcommand{\leftbox}[2]{\phantom{#1} \save []+L*+<.5pc>!!<0pt,\the\fontdimen22\textfont2>!L{#1#2} \restore}
\newcommand{\rightbox}[2]{\phantom{#2} \save []+R*+<.5pc>!!<0pt,\the\fontdimen22\textfont2>!R{#1#2} \restore}
\newcommand{\cleftbox}[2]{#1#2 \POS[]+L*+<.5pc>!!<0pt,\the\fontdimen22\textfont2>!L{\phantom{#1}\vphantom{#2}}+C*+<.5pc>{\phantom{#1}\vphantom{#2}}}
\newcommand{\crightbox}[2]{#1#2 \POS[]+R*+<.5pc>!!<0pt,\the\fontdimen22\textfont2>!R{\vphantom{#1}\phantom{#2}}+C*+<.5pc>{\vphantom{#1}\phantom{#2}}}
\newcommand{\stdsimp}[1]{\Delta^{#1}}
\newcommand{\vertex}[1]{#1}
\newcommand{\horn}[2]{%
\xy
<0pt,-\the\fontdimen22\textfont2>;p+<.1em,0em>:
{\ar@{-}(0,0);(3,7)},
{\ar@{-}(3,7);(5,0)},
{\ar@{-}(3.2,7);(5.2,0)},
{\ar@{-}(3.4,7);(5.4,0)}
\endxy\;\!{}^{#1}_{#2}}
\newcommand{\can}{\operatorname{can}}
\newcommand{\conn}{\operatorname{conn}}
\newcommand{\fibre}{\operatorname{fib}}
\newcommand{\id}{\operatorname{id}}
\newcommand{\im}{\operatorname{im}}
\newcommand{\pr}{\operatorname{pr}}
\newcommand{\then}{n}
\newcommand{\Pnew}{P_\then}
\newcommand{\Pold}{P_{\then-1}}
\newcommand{\varphin}{\varphi_\then}
\newcommand{\varphinst}{\varphi_{\then*}}
\newcommand{\varphio}{\varphi_{\then-1}}
\newcommand{\psin}{\psi_\then}
\newcommand{\psio}{\psi_{\then-1}}
\newcommand{\psinst}{\psi_{\then*}}
\newcommand{\pn}{p_\then}
\newcommand{\fn}{f_\then}
\newcommand{\fold}{f_{\then-1}}
\newcommand{\Fn}{F_\then}
\titleformat{\section}[block]
{\normalfont\Large\filcenter\bfseries}{\thesection.}{.33em}{}
\titleformat{\subsection}[runin]
{\normalfont\normalsize\bfseries}{\thesubsection.}{.33em}{}[.]
\title{Computing the abelian heap of unpointed stable homotopy classes of maps\thanks{%
The research was supported by the grant P201/11/0528 of the Czech Science Foundation (GA \v CR).\newline
2010 \emph{Mathematics Subject Classification}. Primary 55Q05; Secondary 55S91.\newline
\emph{Key words and phrases}. stable homotopy class, computation, heap.
}}
\author
{Luk\'a\v{s} Vok\v{r}\'{\i}nek}
\begin{document}

\maketitle

\begin{abstract}
An algorithmic computation of the set of unpointed stable homotopy classes of equivariant fibrewise maps was described in a recent paper \cite{aslep} of the author and his collaborators. In the present paper, we describe a simplification of this computation that uses an abelian heap structure on this set that was observed in another paper \cite{heaps1} of the author. A  heap is essentially a group without a choice of its neutral element; in addition, we allow it to be empty.
\end{abstract}

\section{Introduction}

This paper deals with an algorithmic computation of the set of stable homotopy classes of \emph{unpointed} maps. In the paper \cite{heaps1}, we prove in some generality that such stable homotopy classes form an abelian heap. We give a formal definition of a heap at the end of this section; on the intuitive level, it is a group without a choice of a neutral element, in very much the same way as an affine space is a vector space without a choice of a zero vector. More importantly, we allow a heap to be empty -- this may be the case with the stable homotopy classes when the target space does not admit a basepoint. Interesting examples of such spaces arise naturally in equivariant homotopy theory and fibrewise homotopy theory, e.g.\ spaces equipped with a free action of a group or fibrations that do not possess any section. We treat both cases at the same time.

\subsection*{The working category}

We denote by $G{-}\sSet$ the category of $G$-simplicial sets, i.e.\ simplicial sets equipped with a simplicial action of a finite group $G$, and $G$-maps between them (these are equivariant simplicial maps). For $X\in G{-}\sSet$ and a subgroup $H\leq G$, we denote by $X^H$ the subspace of the $H$-fixed points. Any $G$-map $\ell\colon X\to Y$ restricts to an $H$-fixed point map $\ell^H\colon X^H\to Y^H$.

Our working category $A/G{-}\sSet/B$ is that of $G$-simplicial sets under $A$ and over $B$, i.e.\ $G$-simplicial sets $X$ equipped with a pair of $G$-maps $A\to X\to B$ whose composition is a fixed $G$-map $A\to B$, surpressed from the notation. Morphisms in this category are $G$-maps $\ell\colon X\to Y$ for which both triangles in
\[\xymatrix@C=3pc{
A \ar[r]^-f \ar[d]_-\iota & Y \ar[d]^-\varphi \\
X \ar[r]_-g \ar[ru] \POS?<>(.5)+/:a(90).5pc/*{\scriptstyle\ell} & B
}\]
commute. There is also an obvious notion of homotopy (equivariant, relative to $A$ and fibrewise over $B$). In case $\iota$ is an inclusion and $\varphi$ is a $G$-Kan fibration (the $\varphi^H$ are Kan fibrations for all $H\leq G$), the resulting set of homotopy classes will be denoted by $[X,Y]^A_B$. A square as above is said to be \emph{stable} if for all subgroups $H\leq G$, the relation
\[\dim\operatorname{cof}\iota^H\leq 2\conn\operatorname{fib}\varphi^H\]
holds, where the left-hand side is the dimension of $X^H\smallsetminus A^H$ and $\conn\operatorname{fib}\varphi^H$ denotes the connectivity of the fibre of $\varphi^H$.

For general $X\comma Y$, we define $[X,Y]^A_B$ by first replacing $\iota$ up to weak homotopy equivalence by an inclusion $A\cof X^\mathrm{cof}$ and $\varphi$ by a $G$-Kan fibration $Y^\mathrm{fib}\fib B$ and then setting $[X,Y]^A_B=[X^\mathrm{cof},Y^\mathrm{fib}]^A_B$. A stable square is defined analogously in terms of this replacement.

\begin{theorem}[{\cite[Theorem~1.1]{heaps1}}]\label{t:heap_structure_fibrewise_equivariant}
Let $G$ be a fixed finite group. For any stable commutative square in $G{-}\sSet$
\[\xymatrix@=1.5pc{
A \ar[r]^-f \ar[d]_-\iota & Y \ar[d]^-\varphi \\
X \ar[r]_-g & B
}\]
the set $[X,Y]^A_B$ admits a natural structure of a (possibly empty) abelian heap.
\end{theorem}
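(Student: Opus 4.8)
The plan is to produce the ternary heap operation from an explicit geometric construction inside the threefold fibre product $Y\times_B Y\times_B Y$, and then to verify the heap axioms by restricting that construction to partial diagonals. The only non-formal ingredient is a metastable-range lifting lemma, and this is exactly where the stability hypothesis on the square is used.

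First I would replace the square by a weakly equivalent one in which $\iota$ is a cofibration and $\varphi$ a $G$-Kan fibration, so that $\dim(X^H\smallsetminus A^H)\le 2\conn\operatorname{fib}\varphi^H$ for every subgroup $H\le G$, and I would work throughout with the equivariant fibrewise mapping space $\map^A_B(X,Y)$, whose set of path components is $[X,Y]^A_B$. In the $G$-simplicial set $Y\times_B Y\times_B Y$ over $B$ consider the $G$-subspace $W=\Delta_{01}\cup\Delta_{12}$, where $\Delta_{ij}=\{(y_0,y_1,y_2)\mid y_i=y_j\}$, together with the $G$-map over $B$
\[
m\colon W\lra Y,\qquad m|_{\Delta_{01}}=\pr_2,\quad m|_{\Delta_{12}}=\pr_0,
\]
which is well defined because the two prescriptions agree on the overlap $\Delta_{01}\cap\Delta_{12}$, the full diagonal, both being the identity of $Y$ there. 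A morphism $X\to Y\times_B Y\times_B Y$ in the working category is exactly a triple of morphisms $X\to Y$, so $[X,Y\times_B Y\times_B Y]^A_B=\bigl([X,Y]^A_B\bigr)^3$, and the ternary operation is then the composite
\[
\bigl([X,Y]^A_B\bigr)^3=[X,W]^A_B\xra{m_*}[X,Y]^A_B,
\]
once the first equality is justified.

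That justification is the key lemma: for $X$ with $\dim(X^H\smallsetminus A^H)\le 2\conn\operatorname{fib}\varphi^H$ for all $H$, the inclusion $W\hookrightarrow Y\times_B Y\times_B Y$ induces a bijection $[X,W]^A_B\xra{\cong}[X,Y\times_B Y\times_B Y]^A_B$, indeed a weak equivalence of mapping spaces. I would prove it by the usual dévissage up the relative equivariant skeleta of $X$, reducing to a fibrewise connectivity estimate: regarding $W$ and $Y\times_B Y\times_B Y$ as spaces over $Y$ via the middle projection, the fibre of $W$ over a point $y\in Y^H$ is the ``cross'' $(\{y\}\times F)\cup(F\times\{y\})\subseteq F\times F$ with $F=(\operatorname{fib}\varphi)^H$, and the quotient of $F\times F$ by this cross is the smash product $F\wedge F$, which is $(2\conn\operatorname{fib}\varphi^H+1)$-connected; hence the inclusion of fibres is connected enough that applying $[X,-]$ changes nothing in relative dimensions $\le 2\conn\operatorname{fib}\varphi^H$. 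Since the hypothesis is imposed for every $H$, the estimate runs uniformly over fixed-point systems, and fibrewise equivariant obstruction theory (e.g.\ as in \cite{aslep}) upgrades it to the stated bijection. This lemma, together with the fivefold variant needed below, is the step I expect to be the main obstacle.

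The heap axioms are then verified by evaluating at diagonals. Restricting $m$ along $\Delta_{12}\hookrightarrow W$ and $\Delta_{01}\hookrightarrow W$ gives $[x,y,y]=x$ and $[x,x,y]=y$ immediately; the involution $\tau$ of $W$ transposing the two outer coordinates carries $\Delta_{01}$ onto $\Delta_{12}$ and satisfies $m\tau=m$ literally, whence the commutativity axiom $[x,y,z]=[z,y,x]$. For para-associativity $[[x,y,z],u,v]=[x,y,[z,u,v]]$ I would set up the fivefold analogue: a $G$-subspace $W_5\subseteq Y^{\times_B 5}$ assembled from pairwise intersections $\Delta_{ij}\cap\Delta_{kl}$ with $\{i,j\}\cap\{k,l\}=\emptyset$, chosen to lie inside the locus on which both iterated operations are defined, for which the same smash-product count yields $[X,W_5]^A_B\xra{\cong}[X,Y^{\times_B 5}]^A_B$, together with a $G$-map $m_5\colon W_5\to Y$ coinciding on $W_5$ with the construction of each of the two sides; chasing the unique-up-to-homotopy lift of $(\ell_0,\dots,\ell_4)$ into $W_5$ through $m_5$ then identifies the two sides in $[X,Y]^A_B$. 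As $[x,y,y]=x=[y,y,x]$, para-associativity and commutativity are precisely the axioms of an abelian heap, and the empty case ($[X,Y]^A_B=\emptyset$) is permitted with all axioms vacuous, this completes the construction. Naturality in the stable square is immediate, $W$, $W_5$, $m$, $m_5$ and the maps $m_*$ all being functorial.
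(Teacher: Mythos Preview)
Your approach is correct and is the natural dual of the paper's. Where the paper passes to a Moore--Postnikov stage $P_n$ and \emph{extends} the canonical map $\can\colon\delta^{21,12}P_n\to P_n$ to a Mal'cev operation $\tau\colon\delta^{111}P_n\to P_n$ by showing the relevant obstruction groups vanish, you stay with $Y$ and instead \emph{lift} maps $X\to\delta^{111}Y$ back through the inclusion $W=\delta^{21,12}Y\hookrightarrow\delta^{111}Y$, then compose with $\can$. Both reductions rest on the identical connectivity estimate for the pair $(F\times F,F\vee F)$; the difference is whether one spends it on the target side (finite obstruction tower, hence the need for $P_n$) or on the source side (dimension bound on $X$). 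The paper's choice buys a single Mal'cev operation on the space $P_n$, independent of $X$ and precomputable once $\varphi$ is fixed --- which is the point of the algorithmic refinement the paper is after --- while your version avoids the tower entirely and does not seem to require the extra hypotheses (free action, $B$ simply connected) under which the paper gives its proof.

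One correction to your associativity step: your $W_5$ should be the union $\Delta_{01}\cup\Delta_{34}\subset Y^{\times_B5}$ of two single diagonals (the exact analogue of the paper's $\delta^{2111,1112}$), not a union of intersections $\Delta_{ij}\cap\Delta_{kl}$. On $[X,\Delta_{01}\cup\Delta_{34}]^A_B$ the two iterated operations agree by the Mal'cev identities you have already established, and projecting to the middle three coordinates exhibits the fibrewise pair once more as $(F\times F,F\vee F)$, so the inclusion induces a bijection $[X,\Delta_{01}\cup\Delta_{34}]^A_B\xra{\cong}[X,Y^{\times_B5}]^A_B$ and associativity follows. A union of double intersections $\Delta_{ij}\cap\Delta_{kl}$ sits too deep: over the middle coordinate each piece has fibre of the shape $F^2\subset F^4$, and the resulting inclusion is only about $(d{+}1)$-connected, which is not enough.
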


In the special case of the $G$-action on $X$ being free, the stability requirement reads
\[\dim\operatorname{cof}\iota\leq 2\conn\operatorname{fib}\varphi,\]
since for all $H\neq 1$ we have $X^H\smallsetminus A^H=\emptyset$. This is the situation in \cite{aslep}, where we showed how to compute $[X,Y]^A_B$ under the above conditions plus $B$ being simply connected. The computation was complicated by the fact that this set has no preferred element. The algorithm of that paper can be improved by working with (natural) abelian heaps instead of (non-natural) abelian groups. This has a further advantage -- much more data can be precomputed if $Y$ is fixed; this will become apparent later.

\begin{theorem}[{\cite[Theorem~1.2]{aslep}}]\label{t:computing_homotopy_classes}
There exists an algorithm that, given a commutative square
\[\xymatrix@=1.5pc{
A \ar[r]^-f \ar[d]_-\iota & Y \ar[d]^-\varphi \\
X \ar[r]_-g & B
}\]
as in Theorem~\ref{t:heap_structure_fibrewise_equivariant}, with all $G$-actions free and with $B$ simply connected, computes the (possibly empty) abelian heap $[X,Y]^A_B$.
\end{theorem}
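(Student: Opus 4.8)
The plan is to follow the obstruction-theoretic computation of \cite{aslep} along the equivariant Moore--Postnikov tower of $\varphi$, but organised so that every intermediate set of homotopy classes carries the abelian heap structure of Theorem~\ref{t:heap_structure_fibrewise_equivariant} and every comparison map is a morphism of heaps, so that the output is that heap rather than a non-canonically pointed group. First, using the functorial cofibrant and fibrant replacements of the setup, we may assume $\iota$ is a $G$-cofibration and $\varphi$ a $G$-Kan fibration; write $d_H=\dim(X^H\smallsetminus A^H)$ and $d=\max_H d_H$, so that stability reads $d_H\le 2\conn\fibre\varphi^H$ for every $H\le G$. Then construct the equivariant Moore--Postnikov tower of $\varphi$, namely $B\la\cdots\la P_n\la P_{n-1}\la\cdots\la Y$, truncated at stage $d$: since $B$ is simply connected and the $G$-actions are free, the stages $P_n$, the principal fibrations $P_n\to P_{n-1}$ with fibre $K(\pi_n,n)$ (where $\pi_n$ is the $n$-th homotopy group of $\fibre\varphi$, a $\mathbb Z[G]$-module with trivial $\pi_1$-action), and the $k$-invariants $P_{n-1}\to K(\pi_n,n+1)$ over $B$ all carry the effective-homology structure furnished by \cite{aslep}; crucially, this data depends only on $\varphi\colon Y\to B$ and may be precomputed. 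Because $\fibre(Y\to P_d)$ is $d$-connected while $\dim(X\smallsetminus A)\le d$, obstruction theory provides a natural bijection $[X,Y]^A_B\cong[X,P_d]^A_B$, so it suffices to compute the latter.

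For each $n\le d$ the square $A\to P_n\to B$ is again stable, since $\fibre(P_n^H\to B^H)$ is the $n$-th Postnikov stage of $\fibre\varphi^H$ and therefore has at least the same connectivity. Hence, by Theorem~\ref{t:heap_structure_fibrewise_equivariant}, every $[X,P_n]^A_B$ is an abelian heap, and by the naturality asserted there the projection $[X,P_n]^A_B\to[X,P_{n-1}]^A_B$, induced by the evident morphism of stable squares over $B$ under $A$, is a morphism of heaps. Thus the tower presents $[X,Y]^A_B$ as the limit of a finite tower of abelian heaps whose bottom term $[X,B]^A_B$ is the one-point heap $\{g\}$, and the algorithm builds this tower from the bottom upwards.

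The inductive step is the classical analysis of a principal fibration carried out with (equivariant) chain complexes. Suppose $[X,P_{n-1}]^A_B$ has already been described. A class $[g_{n-1}]$ lies in the image of the projection precisely when the obstruction $g_{n-1}^*k_{n+1}\in H^{n+1}_G(X,A;\pi_n)$ vanishes; by freeness this equivariant relative cohomology with coefficients in the $\mathbb Z[G]$-module $\pi_n$ is the cohomology of $(X\smallsetminus A)/G$ with the associated local system, and is computable from the data of the previous stage via effective homology. This cuts out the sub-heap of liftable classes. Over each such class the fibre of the projection is a coset space of $H^n_G(X,A;\pi_n)$ -- the quotient by the image of a connecting homomorphism that records the effect of self-homotopies of $g_{n-1}$, itself computable from the previous stages -- and the heap operation on $[X,P_n]^A_B$ is reconstructed from that on $[X,P_{n-1}]^A_B$ together with these fibres through the standard difference-cochain formula; this is what renders the ternary operation algorithmic. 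If the image becomes empty at some stage, the algorithm halts and reports $[X,Y]^A_B=\emptyset$.

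The main obstacle -- and the only genuinely new point beyond \cite{aslep} -- is propagating the \emph{heap} structure, rather than an arbitrarily chosen group structure, through the whole tower: one must verify that the abstract operation of Theorem~\ref{t:heap_structure_fibrewise_equivariant} agrees, stage by stage, with the difference-cochain description above, so that the inductively computed ternary operation is indeed the natural one. The effectiveness of the $k$-invariants over $B$, of the coefficient systems $\pi_n$, and of the relevant cochain complexes and connecting homomorphisms is inherited from \cite{aslep}; identifying the two heap structures is where the construction of \cite{heaps1} must be fed into the explicit tower. The benefit over the earlier algorithm is precisely that the $\varphi$-dependent part of the tower is precomputed, leaving only the cohomology of $(X\smallsetminus A)/G$ with the finitely many systems $\pi_n$ to be recomputed for each input square.
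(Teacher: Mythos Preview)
Your overall strategy---compute along the Moore--Postnikov tower of $\varphi$, carry an abelian heap structure through each stage, and analyse the principal fibration $P_n\to P_{n-1}$ via an exact sequence---matches the paper's, and you correctly isolate the only genuinely new difficulty: making the heap operation on each $[X,P_n]^A_B$ algorithmic in a way compatible with the tower and with the $k$-invariants.

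But you do not actually resolve that difficulty; you defer it to a ``standard difference-cochain formula'' and to ``feeding the construction of \cite{heaps1} into the explicit tower.'' The paper's substance is precisely here, and its solution is different in kind from what you sketch. Rather than describing the ternary operation on homotopy classes cochain-theoretically, the paper constructs a \emph{space-level} (weak) Mal'cev operation
\[
\tau\colon P_n\times_B P_n\times_B P_n\longrightarrow P_n
\]
inductively along the tower, via the explicit formula
\[
\tau\bigl((x,c),(r,e),(y,d)\bigr)=\bigl(\tau(x,r,y),\,c-e+d+M(x,r,y)\bigr)
\]
in the pullback description $P_n\subset P_{n-1}\times E(\pi_n,n)$, with $M$ a computable lift of the deviation $m=k_n\tau-(k_nx-k_nr+k_ny)$. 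This $\tau$ depends only on $\varphi$, induces the heap structure on every $[X,P_n]^A_B$, and---crucially---makes $k_{n*}$ a heap homomorphism \emph{by construction}, so that emptiness at each stage reduces to the linear system $\sum t_i\,k_{n*}(x_i)=0$, $\sum t_i=1$ in the abelian group $[X,K_{n+1}]^A_B$.

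Your description of the liftable classes as a ``sub-heap cut out by vanishing of $g_{n-1}^*k_{n+1}$'' already presupposes that $k_{n*}$ respects the heap operation, and your reconstruction of the ternary operation on the next stage presupposes a concrete, computable model for it. Without the space-level $\tau$ (or an equally explicit substitute) these are unverified compatibilities, and the inductive step does not run. As written, then, your proposal is a correct outline whose central lemma---the algorithmic realisation of the heap operation and its compatibility with $k_n$---is left open; the paper's Theorems on the (weak) Mal'cev operation on the stages are exactly what fills that gap.
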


Since we may replace $\iota$ by an inclusion $A\to(\stdsimp 1\times A)\cup X$ in an algorithmic way, we will assume from now on that $\iota$ is an inclusion to start with. The proof of Theorem~\ref{t:computing_homotopy_classes} is given in Section~\ref{s:proof_computing_homotopy_classes}. In Section~\ref{s:short_proof}, we outline a slower but much simpler algorithm.

\subsection*{Heaps}

A \emph{Mal'cev operation} on a set $S$ is a ternary operation
\[t\colon S\times S\times S\to S\]
satisfying the following two conditions: $t(x,x,y)=y$, $t(x,y,y)=x$. It is said to be
\begin{itemize}[topsep=2pt,itemsep=2pt,parsep=2pt,leftmargin=\parindent,label=--]
\item
	\emph{asssociative} if $t(x,r,t(y,s,z))=t(t(x,r,y),s,z)$;
\item
	\emph{commutative} if $t(x,r,y)=t(y,r,x)$.
\end{itemize}
A set equipped with an associative Mal'cev operation is called a \emph{heap}. It is said to be an \emph{abelian heap} if in addition, the operation is commutative. We remark that traditionally, heaps are assumed to be non-empty. Since it is easy to produce examples where $[X,Y]=\emptyset$ in Theorem~\ref{t:heap_structure_fibrewise_equivariant}, it will be more convenient to drop this convention.

The relation of heaps and groups works as follows. Every group becomes a heap if the Mal'cev operation is defined as $t(x,r,y)=x-r+y$. On the other hand, by fixing an element $0\in S$ of a heap $S$, we may define the addition and the inverse
\[x+y=t(x,0,y),\quad -x=t(0,x,0).\]
It is simple to verify that this makes $S$ into a group with neutral element $0$. In both passages, commutativity of heaps corresponds exactly to the commutativity of groups.

\section{The Moore--Postnikov tower approach}

\subsection*{The tower}

We will first give a proof of Theorem~\ref{t:heap_structure_fibrewise_equivariant}, independent of \cite{heaps1}, under the additional assumptions of Theorem~\ref{t:computing_homotopy_classes}. We consider the \emph{Moore--Postnikov tower} of $Y$ over $B$; it consists of spaces $\Pnew$ over $B$, called the \emph{stages}, that are approximations of $Y$ in the following sense: there are factorizations
\[\xymatrix@R=.5pc@C=3pc{
& \Pnew \ar[rd]^-{\psin} \ar[dd]^-{\pn} \\
Y \ar[ru]^-{\varphin} \ar[rd]_-{\varphio} & & B \\
& \Pold \ar[ru]_-{\psio}
}\]
of $\varphi$ such that
\begin{itemize}[topsep=2pt,itemsep=2pt,parsep=2pt,leftmargin=\parindent,label=$\bullet$]
\item
	$\varphinst\colon\pi_iY\to\pi_i\Pnew$ is an isomorphism for $i<\then+1$ and an epimorphism for $i=\then+1$,
\item
	$\psinst\colon\pi_i\Pnew\to\pi_i B$ is an isomorphism for $i>\then+1$ and a monomorphism for $i=\then+1$.
\end{itemize}
Thus, $\Pnew$ looks like $Y$ in low dimensions and like $B$ in high dimensions. When both $B$ and $Y$ are simply connected, there are pullback squares (the latter in $G{-}\sSet/B$)
\begin{equation}\tag{\textsf{MPT}}\label{e:MP_stages}
\xymatrixc{
\Pnew \ar[r] \ar[d]_-{p_\then} \pb & E(\pi_\then,\then) \ar[d]^-\delta & & \Pnew \ar[r] \ar[d]_-{p_\then} \pb & B\times E(\pi_\then,\then) \ar[d]^-\delta 
\\
\Pold \ar[r]_-{k_\then'} & K(\pi_\then,\then+1) & & \Pold \ar[r]_-{k_\then} & \phantom{B\times E(\pi_\then,\then)} \POS[]+L*+<.5pc>!!<0pt,\the\fontdimen22\textfont2>!L{B\times K(\pi_\then,\then+1)}
}\end{equation}
where $K(\pi_\then,\then+1)$ is the Eilenberg--MacLane space and $E(\pi_\then,\then)$ is its path space. The standard simplicial models for these spaces are minimal and could be used for an algorithmic construction of the tower, see \cite{polypost}.

\subsection*{Fibrewise Mal'cev operations}
For $\dim(X\smallsetminus A)\leq n$, there is an isomorphism $[X,Y]^A_B\cong[X,\Pnew]^A_B$, see \cite[Theorem 3.3]{aslep}. We will show that for $\then\leq 2\conn\operatorname{fib}\varphi$, there exists a natural abelian heap structure on $[X,\Pnew]^A_B$ without any restrictions on $X$. As follows easily from Yoneda lemma, any \emph{natural} abelian heap structure on $[-,\Pnew]^A_B$ is induced by an operation
\[\tau\colon\Pnew\times_B\Pnew\times_B\Pnew\to\Pnew.\]
The result will thus follow from the following definition and theorem.

\begin{definition}
A \emph{Mal'cev operation} on $\zeta\colon Z\ra B$ is a fibrewise map
\[\tau\col Z\times_BZ\times_BZ\ra Z\]
that satisfies the Mal'cev conditions $\tau(x,x,y)=y$, $\tau(x,y,y)=x$ whenever $x$, $y$ lie in the same fibre of $\zeta$, i.e.\ it is a Mal'cev operation on $Z\in G{-}\sSet/B$.

It is said to be \emph{homotopy associative} if there exists an equivariant fibrewise homotopy
\begin{equation}\tag{\textsf{ass}}\label{e:ass}
\tau(x,r,\tau(y,s,z))\sim\tau(\tau(x,r,y),s,z)
\end{equation}
that is constant on the diagonal (i.e.~when $x=r=y=s=z$). It is said to be \emph{homotopy commutative} if
\begin{equation}\tag{\textsf{comm}}\label{e:comm}
\tau(x,r,y)\sim\tau(y,r,x)
\end{equation}
by an equivariant fibrewise homotopy that is constant on the diagonal.
\end{definition}

\begin{theorem}\label{t:heap_structure_on_MP_stages}
For each $\then\leq 2\conn\operatorname{fib}\varphi$, there exists a Mal'cev operation on $\psin\colon\Pnew\to B$ and it is unique up to homotopy. Every such operation is homotopy associative and homotopy commutative.
\end{theorem}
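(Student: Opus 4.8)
The plan is to present a Mal'cev operation on $\psin$ as the solution of an equivariant fibrewise lifting problem over $B$, and to kill the resulting obstruction groups by a connectivity estimate for certain ``degeneracy'' subcomplexes. Write $F=\operatorname{fib}\varphi$ and $c=\conn F$. The defining properties of the Moore--Postnikov tower identify the fibre $F_\then$ of $\psin\colon\Pnew\to B$ with the $\then$-th Postnikov section of $F$, so that $\pi_iF_\then=0$ for $i>\then$ and for $i\le c$. If $\then\le c$ the fibre $F_\then$ is contractible, so the $G$-Kan fibration $\psin$ is a weak equivalence, hence has the right lifting property against all $G$-cofibrations, from which the statement follows routinely. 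So assume $c+1\le\then\le 2c$; then $c\ge 1$ and all spaces that occur below --- $\Pnew$, the fibre $F_\then$, the iterated fibre products $\Pnew^{\times_B k}$, and the subcomplexes introduced next --- are simply connected, while $G$ acts freely. Consequently equivariant fibrewise obstruction theory over $B$ applies: the obstructions to extending an equivariant fibrewise map defined on a sub-$G$-simplicial set $S$ to one on $\Pnew^{\times_B k}$ lie in the Bredon cohomology groups $H^{*}_G(\Pnew^{\times_B k},S;\underline{\pi_*F_\then})$, computed in the free case as the cohomology of $\operatorname{Hom}_{\mathbb Z[G]}\bigl(C_*(\Pnew^{\times_B k},S),-\bigr)$.

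The key input is a connectivity lemma. Inside $\Pnew^{\times_B 3}$ let $D$ be the union of the images of the two degeneracy inclusions $(x,y)\mapsto(x,x,y)$ and $(x,y)\mapsto(x,y,y)$, and inside $\Pnew^{\times_B 5}$ let $E$ be the union of the images of $(x_1,x_3,x_4,x_5)\mapsto(x_1,x_1,x_3,x_4,x_5)$ and $(x_1,x_2,x_3,x_4)\mapsto(x_1,x_2,x_3,x_4,x_4)$. I claim that $(\Pnew^{\times_B 3},D)$ and $(\Pnew^{\times_B 5},E)$ are $(2c+1)$-connected. Since $\psin$ is a $G$-Kan fibration over the connected base $B$, this reduces to the corresponding statement on fibres: for any $c$-connected space $F$, the analogous subspaces $D_0\subseteq F^3$ and $E_0\subseteq F^5$ are $(2c+1)$-connected inclusions. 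All the spaces here are simply connected, so by Hurewicz it is enough to show the inclusions are isomorphisms on $\tilde H_j$ for $j\le 2c+1$. In that range the reduced Künneth theorem gives $\tilde H_j(F^m)\cong\tilde H_j(F)^{\oplus m}$ (the $i$-th summand detected by the $i$-th projection, with no mixed or $\operatorname{Tor}$ contributions, which begin only in degree $2c+2$); a Mayer--Vietoris computation, each of $D_0$ and $E_0$ being a union of two coordinate subspaces meeting along a thin diagonal, identifies $\tilde H_j$ of the subspace with a direct sum of copies of $\tilde H_j(F)$; and tracking the inclusion through the projections one checks that the induced map onto $\tilde H_j(F^m)$ is bijective. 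Hence the relative reduced homology vanishes through degree $2c+1$.

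Granting the lemma, existence and uniqueness are immediate. The Mal'cev identities $\tau(x,x,y)=y$ and $\tau(x,y,y)=x$ determine an equivariant fibrewise map $D\to\Pnew$ (the two prescriptions agreeing on the triple diagonal), and a Mal'cev operation on $\psin$ is exactly an extension of it over $\Pnew^{\times_B 3}$. The obstructions to such an extension lie in $H^{q+1}_G(\Pnew^{\times_B 3},D;\underline{\pi_qF_\then})$ for $c+1\le q\le\then$, hence in cohomological degrees $q+1\le\then+1\le 2c+1$ --- and it is precisely here that the hypothesis $\then\le 2c$ enters. Since $(\Pnew^{\times_B 3},D)$ is simply connected and $(2c+1)$-connected and $G$ acts freely, $C_*(\Pnew^{\times_B 3},D)$ is $\mathbb Z[G]$-chain equivalent to a complex concentrated in degrees $>2c+1$, so these obstruction groups vanish and the extension exists. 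The same vanishing in degrees $q\le\then\le 2c$ shows that any two extensions of $D\to\Pnew$ are equivariantly fibrewise homotopic rel $D$; applying this to a Mal'cev operation $\tau$ and to its composite with the transposition $(x,r,y)\mapsto(y,r,x)$ --- which, by the Mal'cev identities, restricts on $D$ to the same prescribed map --- proves homotopy commutativity, the homotopy being constant on the diagonal since the diagonal is contained in $D$.

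Homotopy associativity is obtained in the same manner. The two equivariant fibrewise maps $\Pnew^{\times_B 5}\to\Pnew$ carrying $(x,r,y,s,z)$ to $\tau(x,r,\tau(y,s,z))$ and to $\tau(\tau(x,r,y),s,z)$ agree on all of $E$: if $x=r$ then both equal $\tau(y,s,z)$, if $s=z$ then both equal $\tau(x,r,y)$, by the Mal'cev identities, and these two descriptions coincide on the overlap. Thus both maps extend the resulting equivariant fibrewise map $E\to\Pnew$, and since $(\Pnew^{\times_B 5},E)$ is $(2c+1)$-connected and $\then\le 2c$, the obstruction argument above shows the two are equivariantly fibrewise homotopic rel $E$; as the diagonal lies in $E$, this homotopy is constant there, which is homotopy associativity. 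The only genuinely non-formal ingredient is the connectivity lemma of the second paragraph; I expect the care to be needed in the Mayer--Vietoris bookkeeping, specifically in checking that the inclusions $D_0\hookrightarrow F^3$ and $E_0\hookrightarrow F^5$ are \emph{isomorphisms} on homology in the range $j\le 2c+1$ rather than merely surjections.
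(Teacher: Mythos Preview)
Your argument is correct and follows the same outline as the paper's: a Mal'cev operation is a diagonal in the lifting square \eqref{e:Mal'cev}, the obstructions live in equivariant cohomology of the pair $(\delta^{111}\Pnew,\delta^{21,12}\Pnew)$ with coefficients in $\pi_*F_\then$, and these vanish once the pair is shown to be $(2c+1)$-connected; associativity and commutativity are handled by the same mechanism on the five-fold and three-fold fibre products.

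The one genuine difference is in how the connectivity is established. You project everything to $B$ and compute the homology of the fibre $D_0\subseteq F_\then^3$ (resp.\ $E_0\subseteq F_\then^5$) directly via Mayer--Vietoris and the K\"unneth splitting in the range $\le 2c+1$. The paper instead projects to the \emph{middle} factor $\Pnew$ (resp.\ the middle three factors $\delta^{111}\Pnew$): over a point $r$ the fibre of $\delta^{21,12}\Pnew$ is $\{(r,r,y)\}\cup\{(x,r,r)\}\cong F_\then\vee F_\then$, while the fibre of $\delta^{111}\Pnew$ is $F_\then\times F_\then$, and one simply invokes the classical fact that $(F\times F,F\vee F)$ is $(2c+1)$-connected for $c$-connected $F$. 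The paper's packaging is shorter and avoids the bookkeeping you flag at the end; your version makes the homological content explicit and works just as well. Note also that your reduction ``to fibres over $B$'' tacitly uses that $D\to B$ is a quasi-fibration (a union of fibrations along a sub-fibration); the paper makes the analogous claim for the projection to the middle component and points to \cite[Theorem~5.3]{aslep} for the verification.
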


To finish our alternative proof of Theorem~\ref{t:heap_structure_fibrewise_equivariant}, we describe the Mal'cev operation on $[X,\Pnew]^A_B$: it is given simply as
\[t([\ell_1],[\ell_0],[\ell_2])=[\tau(\ell_1,\ell_0,\ell_2)];\]
since all maps restrict to $A$ to $\fn=\varphin f$, the Mal'cev condition gives the same for $\tau(\ell_1,\ell_0,\ell_2)$.\qed\vskip\topsep

\subsection*{Notation}

Before going into the proof of Theorem~\ref{t:heap_structure_on_MP_stages}, we introduce some notation. When $r=r_1+\cdots+r_k$ is a decomposition of a positive integer $r$, we denote by $\delta^{r_1\cdots r_k}Z$ the subspace of $Z\times_B\cdots\times_BZ$ ($r$-times) consisting of such $r$-tuples among which the first $r_1$ are equal, the next $r_2$ are equal, etc.\ and the last $r_k$ are equal. Thus $\delta^{21}Z=\{(x,x,y)\}$, $\delta^{12}Z=\{(x,y,y)\}$, etc. We also introduce the notation
\[\delta^{21,12}Z=\delta^{21}Z\cup\delta^{12}Z.\]
By this convention, $\delta^{111}Z=Z\times_BZ\times_BZ$. Thus, a Mal'cev operation on $\zeta$ is a diagonal in the following diagram, i.e.\ a map $\tau$ making both triangles commute,
\begin{equation}\tag{\textsf{Mal}}\label{e:Mal'cev}
\xy*!C\xybox{\xymatrix@C=3pc{
\delta^{21,12}Z \ar[r]^-\can \ar@{ >->}[d] & Z \ar@{->>}[d] \\
\delta^{111}Z \ar[r] \ar@{-->}[ur]^-\tau & B
}}\endxy\end{equation}
where $\can\colon\delta^{21,12}Z\to Z$ is the map sending $(x,x,y)\mapsto y$ and $(x,y,y)\mapsto x$ as in the definition of a Mal'cev operation.

\subsection*{Proof of Theorem~\ref{t:heap_structure_on_MP_stages}}

Denoting $d=\conn\operatorname{fib}\varphi$, the homotopy groups of the fibre $\Fn=\fibre\psin$ of $\psin\colon\Pnew\to B$ are concentrated in dimensions $d+1$ through $2d$. Thus, the obstructions to the existence of a lift $\tau$ as in \eqref{e:Mal'cev} lie in cohomology groups
\[H^{i+1}_G(\delta^{111}\Pnew,\delta^{21,12}\Pnew;\pi_i\Fn)\]
for $d+1\leq i\leq 2d$. Similarly to the proof of \cite[Theorem~5.3]{aslep}, it can be shown that projections from both $\delta^{111}\Pnew$ and $\delta^{21,12}\Pnew$ to their middle component $\Pnew$ are quasi-fibrations with fibres as indicated in the left of the following diagram
\[\xymatrix{
\Fn\vee \Fn \ar[r] \ar[d] &
	\delta^{21,12}\Pnew \ar[r] \ar[d] &
	\Pnew \ar[d]^-\id \\
\Fn\times \Fn \ar[r] &
	\delta^{111}\Pnew \ar[r] &
	\Pnew
}\]
Since the pair $(\Fn\times \Fn,\Fn\vee \Fn)$ is known to be $(2d+1)$-connected, the map of the fibres is a $(2d+1)$-equivalence and thus, so is the map of the total spaces. In particular, the above cohomology groups vanish for $i\leq 2d$.

Each obstruction to the existence of a homotopy between two diagonals $\tau$, $\tau'$ as above lies one dimension lower and is thus also zero, proving the uniqueness up to homotopy.

Concerning the homotopy associativity, the two sides of \eqref{e:ass} prescribe two maps $\delta^{11111}\Pnew\to \Pnew$ that restrict to the same map on $\delta^{2111,1112}\Pnew$, i.e.\  when either $x=r$ or $s=z$. Thinking of both spaces as spaces over the middle three components $\delta^{111}\Pnew$, the fibres are again $\Fn\vee \Fn$ and $\Fn\times \Fn$. Thus, the respective obstructions vanish and the two sides of \eqref{e:ass} are fibrewise homotopic relative to $\delta^{2111,1112}\Pnew$ which contains $\delta^5\Pnew$ as desired. The proof of homotopy commutativity is similar.\qed\vskip\topsep

\section{Constructing Mal'cev operations}

We will now turn our attention to the algorithmic side. We will describe a way of constructing Mal'cev operations on Moore--Postnikov stages $\Pnew$. It turns out that in order to carry out this construction algorithmically, we need to weaken the Mal'cev conditions.

\begin{definition}
A \emph{weak Mal'cev operation} on a fibration $\zeta\colon Z\fib B$ is a map
\[\tau\colon Z\times_BZ\times_BZ\ra Z\]
together with equivariant fibrewise homotopies
\[\lambda\col y\sim\tau(x,x,y),\quad \rho\col x\sim\tau(x,y,y)\]
and a ``second order'' equivariant fibrewise homotopy $\eta$ of the restrictions of $\lambda$ and $\rho$ to the diagonal:
\[\xymatrix@=1.5pc{
& \tau(x,x,x) \POS[];[d]**{}?<>(.5)*{\scriptstyle//\eta_x//} \\
x \ar[ru]^-{\lambda_{x,x}} \ar[rr]_-{s_0x} & & x \ar[lu]_-{\rho_{x,x}}
}\]
\end{definition}

Again, it is possible to formalize weak Mal'cev operations as diagonals in a diagram similar to \eqref{e:Mal'cev}, namely
\begin{equation}\tag{\textsf{wMal}}\label{e:weak_Mal'cev}
\xy*!C\xybox{\xymatrix@C=3pc{
\widehat\delta^{21,12}\Pnew \ar[r] \ar@{ >->}[d] & \Pnew \ar@{->>}[d] \\
\widehat\delta^{111}\Pnew \ar[r] \ar@{-->}[ur]^-\tau & B
}}\endxy\end{equation}
where $\widehat\delta^{111}\Pnew$ is a subspace of $\stdsimp 2\times\delta^{111}\Pnew$ given as
\[\widehat\delta^{111}\Pnew=(\stdsimp 2\times\delta^3\Pnew)\cup(d_1\stdsimp 2\times\delta^{21}\Pnew)\cup(d_0\stdsimp 2\times\delta^{12}\Pnew)\cup(2\times\delta^{111}\Pnew)\]
and $\widehat\delta^{21,12}\Pnew$ is the part over the second face $d_2\stdsimp 2$, i.e.
\[\widehat\delta^{21,12}\Pnew=(d_2\stdsimp 2\times\delta^3\Pnew)\cup(0\times\delta^{21}\Pnew)\cup(1\times\delta^{12}\Pnew).\]
For a similar construction, consult \cite[Section~5.5]{aslep}.

\begin{theorem}\label{t:constructive_heap_structure_on_MP_stages}
There is an algorithm that, given a map $\varphi\colon Y\to B$ between simply connected finite simplicial sets and $n\leq 2\conn\operatorname{fib}\varphi$, constructs a weak Mal'cev operation on the $n$-th stage $\psi_n\colon\Pnew\to B$ of the Moore--Postnikov tower of $Y$ over $B$.
\end{theorem}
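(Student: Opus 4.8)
The plan is to run the same obstruction-theoretic argument as in the proof of Theorem~\ref{t:heap_structure_on_MP_stages}, but carried out algorithmically stage by stage along the Moore--Postnikov tower, using the effective homology / polynomial-time Postnikov machinery of \cite{polypost,aslep}. First I would construct the tower $\Pnew$ together with the pullback presentations \eqref{e:MP_stages}, so that each $\Pnew$ is given as an explicit simplicial set with an effective description of its cohomology with local (here, since $B$ and $Y$ are simply connected, constant) coefficients. The weakening from \eqref{e:Mal'cev} to \eqref{e:weak_Mal'cev} is precisely what makes the lift constructible: instead of demanding that $\tau$ restrict \emph{strictly} to $\can$ on $\delta^{21,12}\Pnew$, one only fixes $\tau$ on the thickened subspace $\widehat\delta^{21,12}\Pnew$ over the face $d_2\stdsimp2$, which records $\can$ together with the homotopies $\lambda,\rho$ and the second-order homotopy $\eta$; on the diagonal part $2\times\delta^{111}\Pnew$ and on the mixed strata one puts the already-available degeneracies and lower homotopies. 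So the task becomes: extend a given map $\widehat\delta^{21,12}\Pnew\to\Pnew$ over the cofibration $\widehat\delta^{21,12}\Pnew\cof\widehat\delta^{111}\Pnew$, compatibly with the projection to $B$.

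The key steps, in order. (1) Set up the relative cohomology groups governing the lift: by the pullback square \eqref{e:MP_stages} over $B$, extending a lift from $\widehat\delta^{21,12}\Pnew$ to $\widehat\delta^{111}\Pnew$ one Postnikov stage at a time is controlled by classes in $H^{i+1}_G(\widehat\delta^{111}\Pnew,\widehat\delta^{21,12}\Pnew;\pi_i\Fn)$ for $d+1\le i\le 2d$, exactly as in the proof of Theorem~\ref{t:heap_structure_on_MP_stages}. (2) Show these groups vanish: the thickened spaces $\widehat\delta^{111}\Pnew$ and $\widehat\delta^{21,12}\Pnew$ deformation retract (equivariantly, fibrewise over $B$) onto $\delta^{111}\Pnew$ and $\delta^{21,12}\Pnew$ respectively — collapsing the $\stdsimp2$-direction — so the pair is equivalent to $(\delta^{111}\Pnew,\delta^{21,12}\Pnew)$, and the fibrewise-over-$\Pnew$ analysis of Theorem~\ref{t:heap_structure_on_MP_stages} (fibres $\Fn\times\Fn$ and $\Fn\vee\Fn$, a $(2d+1)$-connected pair) makes the relative cohomology vanish in the relevant range. (3) Since the obstruction groups vanish, the standard algorithm for lifting against a Postnikov tower (choose a lift on skeleta, kill the obstruction cocycle, which is possible because it is a coboundary and effective homology lets us produce an explicit cochain witnessing this) produces the extension $\tau$ in a finite number of steps; simultaneously it produces the homotopies $\lambda,\rho,\eta$ as the data parametrising $\widehat\delta^{21,12}\Pnew$. (4) Package the output: $\tau$ together with $\lambda,\rho,\eta$ is by construction a weak Mal'cev operation on $\psin\colon\Pnew\to B$.

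The main obstacle I expect is step~(2)–(3) done \emph{effectively}: one must verify that the contractions of the $\stdsimp2$-direction and the quasi-fibration/connectivity estimates of Theorem~\ref{t:heap_structure_on_MP_stages} can be upgraded to an algorithm that actually outputs a nullhomotopy of each obstruction cocycle, not merely proves one exists. This requires that the subspaces $\widehat\delta^{111}\Pnew$ and $\widehat\delta^{21,12}\Pnew$ — which are finite unions of products of faces of $\stdsimp2$ with the strata $\delta^{r_1\cdots r_k}\Pnew$ — be given as finite simplicial sets with effective homology, and that the relative chain complex computing $H^*_G(\widehat\delta^{111}\Pnew,\widehat\delta^{21,12}\Pnew;\pi_i\Fn)$ come with an explicit contracting homotopy in the range $i\le 2d$; this is where the hypothesis $\then\le 2\conn\operatorname{fib}\varphi$ is used, and where one leans on the effective-homology constructions of \cite{polypost} and the techniques of \cite[Section~5.5]{aslep} for the analogous thickened lifting problem. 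Once that effective vanishing is in place, the rest is the routine obstruction-theoretic induction over the at most $2d-d$ relevant Postnikov stages of $\Fn$, each step being a finite linear-algebra computation over the finitely generated abelian groups $\pi_i\Fn$.
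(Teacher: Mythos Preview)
Your proposal is correct, but the paper takes a more explicit route. You run generic obstruction theory for the lifting problem \eqref{e:weak_Mal'cev} directly: fix $n$, lift through the tower of $\psi_n\colon\Pnew\to B$, and at each intermediate stage kill an obstruction class in $H^{i+1}_G(\widehat\delta^{111}\Pnew,\widehat\delta^{21,12}\Pnew;\pi_i\Fn)$, using the deformation retraction to the unthickened pair and the $(\Fn\times\Fn,\Fn\vee\Fn)$ connectivity estimate to see these groups vanish. The paper instead proceeds by induction on the stage index: assuming a weak Mal'cev operation $\tau$ on $\Pold$ is already in hand, it writes down an explicit ansatz on $\Pnew\subset\Pold\times E(\pi_\then,\then)$, namely
\[\tau\bigl((x,c),(r,e),(y,d)\bigr)=\bigl(\tau(x,r,y),\,c-e+d+M(x,r,y)\bigr),\]
so that the only unknown is a single map $M\colon\widehat\delta^{111}\Pold\to E(\pi_\then,\then)$ with $\delta M=m$, where $m=k_\then\tau-(k_\then\pr_1-k_\then\pr_2+k_\then\pr_3)$ is the explicit deviation of the Postnikov invariant $k_\then$ from being a heap homomorphism. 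Finding $M$ is then a single relative cocycle problem against one Eilenberg--MacLane fibration, handled by \cite{aslep}.

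The two arguments are closely related --- the paper's $m$ is precisely your obstruction cocycle at the top stage, and finding $M$ is your ``producing a nullhomotopy'' --- but the paper's formulation buys two things: the obstruction cocycle is given by a closed formula rather than extracted from a partially defined lift, and the construction automatically makes $p_\then\colon\Pnew\to\Pold$ intertwine the weak Mal'cev operations (so $p_{\then*}$ is a heap homomorphism for free, as used in the proof of Theorem~\ref{t:computing_homotopy_classes}). Your approach would recover this compatibility only a posteriori via the uniqueness clause of Theorem~\ref{t:heap_structure_on_MP_stages}. Conversely, your route is more conceptual and makes the dependence on the connectivity hypothesis $n\le 2d$ transparent through the single vanishing statement, whereas the paper leaves the existence of the diagonal $M$ to the machinery of \cite{aslep} without restating the connectivity argument.
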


\subsection*{Notation}

We denote the standard $n$-dimensional simplex by $\stdsimp n$, its boundary by $\partial\stdsimp n$, its $i$-th face by $d_i\stdsimp n$ and its $i$-th vertex by $\vertex i$. We also use $I=\stdsimp 1$.

\subsection*{The induced Mal'cev operation on homotopy classes}

Before going into the proof of Theorem~\ref{t:constructive_heap_structure_on_MP_stages}, we will show how this result yields a new proof of Theorem~\ref{t:computing_homotopy_classes}. The problem with the weak Mal'cev operation $\tau$ is that if all $\ell_1\comma\ell_0\comma\ell_2\colon X\to\Pnew$ restrict to $A$ to a given map $f_n=\varphin f$, the composition $\tau(\ell_1,\ell_0,\ell_2)$ will not. In particular, it does not induce a Mal'cev operation in $[X,\Pnew]^A_B$ in a straightforward way. For that, we will need a strictification result.

\begin{slemma}
Every weak Mal'cev operation on a fibration $\zeta\colon Z\ra B$ is fibrewise homotopic to a strict Mal'cev operation.
\end{slemma}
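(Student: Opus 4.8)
The plan is to use the reformulation \eqref{e:weak_Mal'cev} of the data, with $\widehat\delta^{111}Z$ and $\widehat\delta^{21,12}Z$ defined for $\zeta$ exactly as for $\psin$: a weak Mal'cev operation on $\zeta\colon Z\to B$ is a diagonal $\tau_{\mathrm w}$ in \eqref{e:weak_Mal'cev}, with underlying operation $\tau=\tau_{\mathrm w}|_{\{\vertex 2\}\times\delta^{111}Z}$, while a strict Mal'cev operation is a diagonal $\tau'$ in \eqref{e:Mal'cev}. I must produce such a $\tau'$ together with a fibrewise homotopy $\tau'\sim\tau$. Throughout I use that $\zeta$, being a $G$-Kan fibration, admits lifts against equivariant acyclic cofibrations and in particular has the fibrewise homotopy extension property along equivariant monomorphisms.

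First I would inflate the problem to a cylinder over $\delta^{111}Z$, by showing that the inclusion $\widehat\delta^{111}Z\hookrightarrow\stdsimp 2\times\delta^{111}Z$ is an equivariant acyclic cofibration. Filtering the base by the monomorphisms $\delta^3Z\subseteq\delta^{21}Z\subseteq\delta^{111}Z$ and $\delta^3Z\subseteq\delta^{12}Z\subseteq\delta^{111}Z$, this inclusion is a finite composite of pushouts of the pushout-products of $\delta^3Z\hookrightarrow\delta^{21}Z$ with $d_1\stdsimp 2\hookrightarrow\stdsimp 2$, of $\delta^3Z\hookrightarrow\delta^{12}Z$ with $d_0\stdsimp 2\hookrightarrow\stdsimp 2$, and of $\delta^{21,12}Z\hookrightarrow\delta^{111}Z$ with $\{\vertex 2\}\hookrightarrow\stdsimp 2$. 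Each such pushout-product is the pushout-product of an equivariant monomorphism with an anodyne inclusion into $\stdsimp 2$ carrying the trivial $G$-action, hence is an equivariant acyclic cofibration (checked on $H$-fixed points, on which the simplicial factor is unchanged). Since $\tau_{\mathrm w}$ is a map over $B$, it extends to a fibrewise map $T\colon\stdsimp 2\times\delta^{111}Z\to Z$.

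Next I would extract the strict operation from the restriction of $T$ to the edge $d_2\stdsimp 2\cong\stdsimp 1$. Set $\tau_0=T|_{\{\vertex 0\}\times\delta^{111}Z}$, $\tau_1=T|_{\{\vertex 1\}\times\delta^{111}Z}$ and let $H\colon\tau_0\sim\tau_1$ be $T|_{d_2\stdsimp 2\times\delta^{111}Z}$. Since $\{\vertex 0\}\times\delta^{21}Z\subseteq\widehat\delta^{21,12}Z$ --- it is the $\vertex 0$-end of the homotopy $\lambda$ of the weak data --- and $\tau_{\mathrm w}$ is $\can$ there, we get $\tau_0|_{\delta^{21}Z}=\can$; symmetrically $\tau_1|_{\delta^{12}Z}=\can$. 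The key point is that $T|_{\stdsimp 2\times\delta^3Z}=\tau_{\mathrm w}|_{\stdsimp 2\times\delta^3Z}$ is the second-order homotopy $\eta$, whose $d_2$-edge is $s_0$ (the constant homotopy); hence $H$ is stationary on $\delta^3Z$, equal there to $\can$. Therefore the reversed homotopy $\overline{H|_{\delta^{21}Z}}\colon\tau_1|_{\delta^{21}Z}\sim\can$ and the constant homotopy on $\delta^{12}Z$ at $\tau_1|_{\delta^{12}Z}=\can$ agree on $\delta^3Z=\delta^{21}Z\cap\delta^{12}Z$, so they glue to a fibrewise homotopy $\Theta\colon\tau_1|_{\delta^{21,12}Z}\sim\can$ defined on $\delta^{21,12}Z$. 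Applying the homotopy extension property along the monomorphism $\delta^{21,12}Z\hookrightarrow\delta^{111}Z$ produces a fibrewise homotopy $\widehat\Theta$ on $\delta^{111}Z$ with $\widehat\Theta_0=\tau_1$, and $\tau':=\widehat\Theta_1$ then restricts to $\can$ on $\delta^{21,12}Z$, i.e.\ is a strict Mal'cev operation. Finally $\tau'\sim\tau_1$ via $\widehat\Theta$, and $\tau_1\sim\tau$ via $T|_{d_0\stdsimp 2\times\delta^{111}Z}$ (the edge $d_0\stdsimp 2$ has endpoints $\vertex 1$ and $\vertex 2$, and $T|_{\{\vertex 2\}\times\delta^{111}Z}=\tau$), so $\tau'\sim\tau$ fibrewise, as required.

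The routine ingredients here are the lifting and homotopy-extension properties of $\zeta$. The two steps requiring care are the combinatorial bookkeeping of the first part --- identifying $\widehat\delta^{111}Z\hookrightarrow\stdsimp 2\times\delta^{111}Z$ as an iterated pushout-product and checking the equivariant statement on fixed points --- and, more essentially, the fact that $H$ is \emph{constant}, not merely null-homotopic, on the fibrewise diagonal $\delta^3Z$. This is exactly where the coherence datum $\eta$ of the weak Mal'cev operation gets consumed: without it, the two homotopies assembled into $\Theta$ would only agree on $\delta^3Z$ up to a possibly nontrivial loop, and the merging step would fail. I therefore expect the behaviour on $\delta^3Z$ to be the crux of the argument.
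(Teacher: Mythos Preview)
Your argument is correct, but it takes a longer route than the paper's. Both proofs hinge on the same idea: a suitable subcomplex of $\stdsimp 2\times\delta^{111}Z$ includes as an equivariant acyclic cofibration, so one may lift against the fibration $\zeta$. The difference lies in \emph{which} subcomplex is used. The paper attaches $d_2\stdsimp 2\times\delta^{21,12}Z$, carrying the map $\can$ (constant in the $\stdsimp 2$-direction), to $\widehat\delta^{111}Z$ \emph{before} lifting; this is compatible because the two pieces meet exactly in $\widehat\delta^{21,12}Z$, where both maps agree by the definition of a weak Mal'cev operation. The enlarged inclusion
\[
\widehat\delta^{111}Z\cup(d_2\stdsimp 2\times\delta^{21,12}Z)\ \hookrightarrow\ \stdsimp 2\times\delta^{111}Z
\]
is still an acyclic cofibration (over the strata $\delta^{21}Z$, $\delta^{12}Z$, $\delta^{111}Z$ it is a horn inclusion $\Lambda^2_0$, $\Lambda^2_1$, $\{2\}\hookrightarrow\stdsimp 2$ respectively), and any lift restricted to $\{0\}\times\delta^{111}Z$ is \emph{already} a strict Mal'cev operation $\tau'$, with the fibrewise homotopy $\tau'\sim\tau$ given by the $d_1$-edge. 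Your version instead lifts from $\widehat\delta^{111}Z$ alone, then repairs the resulting $\tau_1$ by reversing $H|_{\delta^{21}Z}$ and performing a second homotopy-extension step; this costs two additional liftings (the reversal and $\widehat\Theta$). The payoff of the paper's packaging is precisely that it absorbs your correction step $\Theta$ into the initial lifting problem, so the coherence datum $\eta$ is consumed in one shot rather than being tracked through $H$ on $\delta^3 Z$.
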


\begin{proof}
The weak Mal'cev operation $\tau$ yields a diagram
\[\xymatrix@C=3pc{
\widehat\delta^{111}Z\cup(\partial_2\stdsimp{2}\times\delta^{21,12}Z) \ar[r]^-{(\tau,\can)} \ar@{ >->}[d]_-\sim &
	Z \ar@{->>}[d]^-\zeta \\
\stdsimp{2}\times\delta^{111}Z \ar[r] \ar@{-->}[ru] &
	B
}\]
The indicated diagonal exists since the map on the left is a weak homotopy equivalence (the proof is similar to \cite[Lemma 7.2]{aslep}); its restriction to $0\times\delta^{111}Z$ is a Mal'cev operation $\tau'$.
\end{proof}

Thus, we may define $t([\ell_1],[\ell_0],[\ell_2])=[\tau'(\ell_1,\ell_0,\ell_2)]$. From the computational point of view, this is not very satisfactory since $\tau'$ is hard to compute.%
\footnote{
	In terms of \cite{aslep}, this would require that the pair formed by the spaces on the left of the diagram in the lemma admits effective homology. We do not know whether this is true.}
However, the composition $\tau'(\ell_1,\ell_0,\ell_2)$ may be computed (up to homotopy) in the following way:

Compose the left cancellation homotopy $\lambda_{x,y}$ with $\fn\colon A\to\Pnew$ in both variables $x$, $y$ to obtain a homotopy $\fn\sim\tau(\fn,\fn,\fn)$ of maps $A\to\Pnew$. Extend this arbitrarily to an equivariant fibrewise homotopy $\ell\sim\tau(\ell_1,\ell_0,\ell_2)$; it is simple to prove that the resulting homotopy class $[\ell]\in[X,\Pnew]^A_B$ is independent of the choice of an extension.

Moreover, $\tau'(\ell_1,\ell_0,\ell_2)$ is also obtained in this way -- the restriction of the diagonal in the proof of the previous lemma to the first boundary $d_1\stdsimp 2\times\delta^{111}Z$ is a homotopy $\tau'\sim\tau$; then compose this with $(\ell_1,\ell_0,\ell_2)$. As a consequence, $[\ell]=t([\ell_1],[\ell_0],[\ell_2])$. Since extending homotopies defined on a pair $(X,A)$ of finite simplicial sets and lifting them in the Moore--Postnikov tower is possible in an algorithmic way by the results of \cite[Section~3.4]{aslep}, one may compute the Mal'cev operation in $[X,\Pnew]^A_B$ if the weak Mal'cev operation on $\psi_n\colon\Pnew\to B$ has been computed.

\subsection*{Proof of Theorem~\ref{t:computing_homotopy_classes}} \label{s:proof_computing_homotopy_classes}

Construct the Moore--Postnikov tower using \cite{aslep} and weak Mal'cev operations on all stages $\Pnew$ with $n\leq\dim_AX$ using Theorem~\ref{t:constructive_heap_structure_on_MP_stages}. We stress here that this part is independent of $A$ and $X$ and may be precomputed when $\varphi\colon Y\to B$ is fixed.

We denote $L_\then=B\times K(\pi_\then,\then)$ and $K_{\then+1}=B\times K(\pi_\then,\then+1)$. Assuming that $[X,\Pold]^A_B$ is non-empty, we consider the following heaps
\begin{equation}\tag{\textsf{les}}\label{e:les}
[\Sigma X,\Pold]^{\Sigma A}_B\xlra{\partial}[X,L_n]^A_B\xlra{a}[X,\Pnew]^A_B\xlra{p_{n*}}[X,\Pold]^A_B\xlra{k_{n*}}[X,K_{n+1}]^A_B
\end{equation}
(they will be made into an exact sequence later). The following are easily verified:
\begin{itemize}[topsep=2pt,itemsep=2pt,parsep=2pt,leftmargin=\parindent,label=$\bullet$]
\item
	$p_{n*}$ and $k_{n*}$ are heap homomorphisms (the later is obtained in the course of the proof of Theorem~\ref{t:constructive_heap_structure_on_MP_stages} below as the claim that $m$ factors through the contractible $E(\pi_\then,n)$),
\item
	$\im p_{n*}=(k_{n*})^{-1}(0)$;
\end{itemize}
the action of $K(\pi_n,n)$ on $\Pnew$ induces an action of $[X,L_n]^A_B$ on $[X,\Pnew]^A_B$ which
\begin{itemize}[topsep=2pt,itemsep=2pt,parsep=2pt,leftmargin=\parindent,label=$\bullet$]
\item
	is transitive on each fibre of $p_{n*}$.
\end{itemize}

To make sense of the first term and the map $\partial$, we have to assume that $[X,\Pnew]^A_B$ is non-empty and fix $[\ell_0]\in[X,\Pnew]^A_B$. The first term is interpreted%
	\footnote{The suspensions $\Sigma A$, $\Sigma X$ are meant to be taken in the category $G{-}\sSet/B$ of spaces over $B$, see \cite{heaps1}. However, it is easy to see that we obtain the same result if they are interpreted in $G{-}\sSet/X$, in which case we get $\Sigma X=I\times X$ and $\Sigma A$ is the indicated subspace.}
as $[I\times X,\Pold]^{(\partial I\times X)\cup(I\times A)}_B$, the homotopy classes of maps $I\times X\to\Pold$ fixed on the indicated subspace in the following way: the restriction to both ends $i\times X$, $i=0\comma 1$, is the composition $\ell_0'=p_n\ell_0$ and the restriction to $I\times A$ is the composition $I\times A\xra\pr A\xra{\fold}\Pold$.

The map $\partial$ is defined as follows: given $h\colon I\times X\to\Pold$, lift it along $p_n\colon\Pnew\to\Pold$ to a homotopy $\widetilde h$ starting at $\ell_0$ and such that the restriction to $I\times A$ is the composition $I\times A\xra\pr A\xra{\fold}\Pold$. Then the restriction $\widetilde h_\mathrm{end}=\widetilde h|_{1\times X}$ lies over $\ell_0'$. Since $p_n$ is a principal twisted cartesian product with structure group $K(\pi_\then,\then)$, there exists a unique $z\colon X\to L_n$ such that $\widetilde h_\mathrm{end}=\ell_0+z$ and we set $\partial[h]=[z]$. It is then easily verified that
\begin{itemize}[topsep=2pt,itemsep=2pt,parsep=2pt,leftmargin=\parindent,label=$\bullet$]
\item
	$\partial$ is a group homomorphism,
\item
	the stabilizer of $[\ell_0]\in[X,\Pnew]^A_B$ is the image of $\partial$.
\end{itemize}

Assuming that some $[\ell_0]\in[X,\Pnew]^A_B$ has been computed, an actual exact sequence of abelian groups is obtained by defining $a$ in \eqref{e:les} to be $a[z]=[\ell_0+z]$; then $[X,\Pnew]$ is computed as in \cite{aslep}.

Thus, it remains to decide whether $[X,\Pnew]^A_B$ is non-empty and if this is the case, compute an element of this heap. Since $k_{n*}$ is a heap homomorphism, it is simple to decide whether $0$ lies in its image -- namely, denoting the heap generators of $[X,\Pold]^A_B$ by $x_1\comma\ldots\comma x_r$, we are looking for a solution of the system
\[t_1k_{n*}(x_1)+\cdots+t_rk_{n*}(x_r)=0,\quad t_1+\cdots+t_r=1\]
in the abelian group $[X,K_{n+1}]^A_B$. This is easy using the Smith normal form (see e.g.\ \cite{stable_maps,aslep}). Once the coefficients $t_i$ are determined, we compute
\[[\ell_0']=t_1x_1+\cdots+t_rx_r\in(k_{n*})^{-1}(0)\]
using the Mal'cev operation in $[X,\Pold]^A_B$. A lift $\ell_0$ of $\ell_0'$ is computed by \cite[Proposition~3.5]{aslep}.\qed\vskip\topsep

\subsection*{Proof of Theorem~\ref{t:constructive_heap_structure_on_MP_stages}}

Using \eqref{e:MP_stages}, we may think of $\Pnew$ as a subset of $\Pold\times E(\pi_\then,\then)$. Assuming that a weak Mal'cev operation has been constructed on $\Pold$, we define it on $\Pnew$ by the formula
\[\tau((x,c),(r,e),(y,d))=(\tau(x,r,y),c-e+d+M(x,r,y)),\]
where $M$ is a map $\widehat\delta^{111}\Pold\to E(\pi_\then,\then)$ yet to be specified. The right-hand side lies in the pullback if and only if $k_n\tau(x,r,y)=\delta c-\delta e+\delta d+\delta M(x,r,y)$, i.e.~$M$ should be the coboundary of
\[m(x,r,y)=k_n\tau(x,r,y)-(\delta c-\delta e+\delta d)=k_n\tau(x,r,y)-(k_nx-k_nr+k_ny),\]
the deviation of $k_n$ from being a heap homomorphism. It is rather straightforward to add higher coherence data to $M$ (see \cite[Section~5.11]{aslep}) and obtain the following diagram
\[\xymatrix@C=3pc{
\widehat\delta^{21,12}\Pold \ar[r]^-0 \ar@{ >->}[d] & E(\pi_\then,\then) \ar@{->>}[d]^-\delta \\
\widehat\delta^{111}\Pold \ar[r]_-m \ar@{-->}[ur]^-M & K(\pi_\then,\then+1)
}\]
in which a diagonal exists and can be computed as in \cite{aslep}.\qed\vskip\topsep

\section{A short proof of Theorem~\ref{t:computing_homotopy_classes} using suspensions}\label{s:short_proof}

We present an alternative proof of Theorem~\ref{t:computing_homotopy_classes}. As we will see, its major flow is that we have to compute an additional Moore--Postnikov stage $P_n$ with $n=1+\dim X$. Since the computation of $P_n$ from $(n,\varphi\colon Y\to B)$ is \#P-hard even for $n$ encoded in unary (see \cite[Theorem~1.2]{hardness}), we believe that this will lead to an unpractical algorithm. On the other hand, the description of the algorithm is much simpler.

According to the proof of the main result in \cite{heaps1}, there is a bijection
\[[X,Y]^A_B\cong[\Sigma_BX,\Sigma_BY]^{\Sigma_BA}_B=[I\times X,\Sigma_BY]^{(\partial I\times X)\cup(I\times A)}_B,\]
where the fibrewise suspension $\Sigma_BY$ is obtained from $I\times Y$ by separately squashing each of $0\times Y$ and $1\times Y$ to $B$ using the given projection $\varphi\colon Y\to B$; it is naturally a space over $B$. We think of it as a space under $(\partial I\times X)\cup(I\times A)$ via the maps
\[\xymatrix@R=1pc{
\partial I\times X \ar[r]^-{\id\times g} & \partial I\times B \ar@{c->}[r] & \Sigma_BY \\
I\times A \ar[r]^-{\id\times f} & I\times Y \ar[r]^-{\mathrm{can}} & \Sigma_BY.
}\]

Replacing $\Sigma_BY\to B$ by its $n$-th Moore--Postnikov stage, $n=1+\dim X$, we are thus left to compute $[I\times X,P_n]^{(\partial I\times X)\cup(I\times A)}_B$. Again, we have an exact sequence \eqref{e:les}, with heap structures coming from the suspension this time. The argument of the proof of Theorem~\ref{t:computing_homotopy_classes} works equally well, once we provide an algorithm computing the Mal'cev operation in $[I\times X,P_n]^{(\partial I\times X)\cup(I\times A)}_B$. Given three maps $\ell_1\comma\ell_0\comma\ell_2\colon I\times X\to P_n$, we organize them into a single map
\[(e_{02}\times X)\cup(e_{12}\times X)\cup(e_{13}\times X)\xlra{(\ell_1,\ell_0,\ell_2)}P_n,\]
where $e_{ij}$ denotes the edge in $\stdsimp 3$ with vertices $i$ and $j$.
\[\xy
<0pc,0pc>;<2pc,0pc>:
(-2,1)*+{\scriptstyle\bullet}="a0"*++!R{0},"a0";
(0,2)*+{\scriptstyle\bullet}="a2"*++!D{2},"a2"**\dir{-};?>*\dir{>},?<>(.5)*+!D{\scriptstyle\ell_1},
(0,0)*+{\scriptstyle\bullet}="a1"*++!U{1},"a1"**\dir{-};?<*\dir{<},?<>(.3)*+!L{\scriptstyle\ell_0},
(2,1)*+{\scriptstyle\bullet}="a3"*++!L{3},"a3"**\dir{-};?>*\dir{>},?<>(.5)*+!U{\scriptstyle\ell_2},
"a0";"a3"**\dir{},?<>(.35)*+!U{\scriptstyle\ell},?<>(.45)**\dir{--},"a0";"a3"**\dir{};?<>(.55);**\dir{--},?>*\dir{>},
"a0";"a1"**\dir{.},"a2";"a3"**\dir{.}
\endxy\]
Together with the composition $\stdsimp 3\times A\xlra{\pr} A\xlra{f_n}P_n$, these describe the top map in the diagram
\[\xymatrix@C=3pc{
((e_{02}\cup e_{12}\cup e_{13})\times X)\cup(\stdsimp 3\times A) \ar[r] \ar@{ >->}[d]_-\sim &
	P_n \ar@{->>}[d]^-{\psi_n} \\
\stdsimp{3}\times X \ar[r] \ar@{-->}[ru] &
	B
}\]
A diagonal can be computed using \cite[Proposition~3.5]{aslep}; its restriction to $e_{03}\times X$, denoted by $\ell$ in the above picture, gives a representative of $t([\ell_1],[\ell_0],[\ell_2])$.\qed

\vskip 20pt
\vfill
\vbox{\footnotesize%
\noindent\begin{minipage}[t]{0.5\textwidth}
{\scshape
Luk\'a\v{s} Vok\v{r}\'inek}
\vskip 2pt
Department of Mathematics and Statistics,\\
Masaryk University,\\
Kotl\'a\v{r}sk\'a~2, 611~37~Brno,\\
Czech Republic
\vskip 2pt
\url{koren@math.muni.cz}
\end{minipage}
}

\end{document}